\newcommand{\ka}{\mathfrak{k}}
\newcommand{\p}{\mathfrak{p}}
\newcommand{\g}{\mathfrak{g}}
\newcommand{\pf}{\begin{proof}}
\newcommand{\epf}{\end{proof}}
\newcommand{\eq}{\begin{equation}}
\newcommand{\eeq}{\end{equation}}
\newcommand{\eqn}{\begin{equation*}}
\newcommand{\eeqn}{\end{equation*}}
\newcommand{\twedge}{\textstyle\bigwedge}
\newcommand{\frg}{\mathfrak{g}}
\newcommand{\frk}{\mathfrak{k}}
\newcommand{\frp}{\mathfrak{p}}
\newcommand{\frsl}{\mathfrak{sl}}
\newcommand{\bbC}{\mathbb{C}}
\newcommand{\bbZ}{\mathbb{Z}}
\newcommand{\cspan }{\operatorname{span}}
\theoremstyle{plain}
\newtheorem{theorem}{Theorem}[section]
\newtheorem{cor}[theorem]{Corollary}
\newtheorem{prop}[theorem]{Proposition}
\newtheorem{lemma}[theorem]{Lemma}
\begin{document}

\title[The centralizer of $K$ in $U(\mathfrak{g}) \otimes C(\mathfrak{p})$ for the group $SO_e(4,1)$]{The centralizer of $K$ in $U(\mathfrak{g}) \otimes C(\mathfrak{p})$ for the group $SO_e(4,1)$}
\author[A. Prli\'c]{Ana Prli\' c*}\thanks{*The author was supported by grant no. 4176 of the Croatian Science Foundation and by the QuantiXLie Center of Excellence.}
\address{Department of Mathematics\\University of Zagreb\\ 10 000 Zagreb\\ Croatia}
\email{anaprlic@math.hr}

\keywords{Lie group, Lie algebra, representation, Dirac operator, Dirac cohomology}

\subjclass[2010]{Primary 22E47; Secondary 22E46}

\begin{abstract}
Let $G$ be the Lie group $SO_e(4,1)$, with maximal compact subgroup $K = S(O(4) \times O(1))_e\cong SO(4)$. Let $\g=\mathfrak{so}(5,\mathbb{C})$ be the complexification of the Lie algebra $\g_0 = \mathfrak{so}(4,1)$ of $G$, and let $U(\g)$ be the universal enveloping algebra of $\g$. Let $\g = \ka \oplus \p$ be the Cartan decomposition of $\g$, and $C(\p)$ the  Clifford algebra of $\p$ with respect to the trace form $B(X, Y) = \text{tr}(XY)$ on $\p$. In this paper we give explicit generators of the algebra $(U(\g) \otimes C(\p))^{K}$.
\end{abstract}

\maketitle

\section{Introduction}
\label{intro}

Let $G$ be a connected real reductive Lie group with Cartan involution $\Theta$, such that $K = G^{\Theta}$ is a maximal compact subgroup of $G$. Let $\g$ be the complexification of the Lie algebra $\g_0$ of $G$, and let $U(\g)$ be the universal enveloping algebra of $\g$. Let 
$\g_0 = \ka_0 \oplus \p_0$ and $\g = \ka \oplus \p$ be the Cartan decompositions of $\g_0$ and $\g$ corresponding to $\Theta$. Let $B$ be a fixed nondegenerate invariant symmetric bilinear form on $\g_0$, such that $B$ is negative definite on $\ka_0$ and positive definite on $\p_0$, and such that $\ka_0$ is orthogonal to $\p_0$. We use the same letter $B$ for the extension of $B$ to $\g$. In general, one can define $B$ as a suitable extension of the Killing form, while for matrix examples one can take the trace form.
Let $C(\p)$ be the Clifford algebra of $\p$ with respect to $B$. 

If $X$ is an irreducible $(\g, K)$--module, then $X$ is completely determined by the action of the centralizer of $\ka$ in $U(\g)$ on any $K$--isotypic component of $X$. This was proved by Harish-Chandra in \cite{HC}. The problem with this approach to determination of all $X$ is that the structure of the algebra $U(\g)^{K}$ is very complicated to describe. 

There is a version of this theorem proved in \cite{PR} where one considers modules $X \otimes S$, where $X$ is a $(\g, K)$--module, and $S$ is the spin module for $C(\p)$. Let $\tilde{K}$  be the spin double cover of $K$. Then $X \otimes S$ is a $(U(\g) \otimes C(\p), \tilde{K})$--module and it is proved in \cite{PR} that an irreducible $(U(\g) \otimes C(\p), \tilde{K})$--module is determined by the action of the algebra $(U(\g) \otimes C(\p))^{K}$ on any nontrivial $\tilde{K}$--isotypic component of $X \otimes S$. 

The algebra $(U(\g) \otimes C(\p))^{K}$ is completely determined for the group $SU(2, 1)$ in \cite{Pr1}. Except for the obvious generators from the center $Z(\ka)$ of $U(\ka)$ and $C(\p)^{K}$, there are two more generators: the Dirac operator, and its $\ka$--version called $\ka$--Dirac. In this paper we prove that a similar result holds for the group $SO_e(4,1)$. In this case $\ka$ is equivalent to the product of two copies of $\mathfrak{sl}(2, \mathbb{C})$ and we prove that the centralizer of $\ka$ in $U(\g) \otimes C(\p)$ is generated by two copies of the Casimir operator of $\mathfrak{sl}(2, C)$, by one generator from  $C(\p)^K$, by the Dirac operator and by the $\ka$--Dirac.  

The algebra $(U(\g) \otimes C(\p))^{K}$ is also important for the Dirac induction developed in \cite{PR} and further studied in \cite{Pr2}. For the reader's convenience, let us recall the basic definitions and results important in the setting of the Dirac operator. 

The Dirac operator corresponding to a real reductive Lie group was defined by Parthasarathy \cite{P}, where it was used to construct the discrete series representations. The final form of the construction was given by Atiyah and Schmid \cite{AS}. The algebraic version of the Dirac operator and the notion of Dirac cohomology were defined by Vogan in \cite{V} and further studied by Huang and Pand\v zi\'c \cite{HP1}, \cite{HP2}. Let $b_i$ be a basis of $\p$ and let $d_i$ be the dual basis with respect to $B$. Then
\[
D = \sum_{i} b_i \otimes d_i\in U(\g)\otimes C(\p).
\]
The sum does not depend of the choice of basis $b_i$. If $X$ is a $(\g, K)$--module, and $S$ a spin module for $C(\p)$, then the Dirac cohomology of $X$ is
\[
H_{D}(X) = \text{Ker}D /\text{Im}D \cap \text{Ker}D.
\]
It is a $\tilde{K}$--module. In \cite{PR} the opposite construction was described, i.e. a construction of a $(\g, K)$--module $X$ whose Dirac cohomology contains a given irreducible $\tilde{K}$--module. It was proved in \cite{PR} and \cite{Pr2} that some discrete series representation can be constructed in that way. The final goal is to prove that the same construction works for all discrete series representation, and also for some other modules which may be difficult to construct otherwise.

\vspace{.2in}

\section{The algebra $(S(\g) \otimes \twedge  (\p))^{K}$}
\label{degrees}

We use the same strategy as in \cite{Pr1}: we study the algebra $(S(\g) \otimes \twedge  (\p))^{K}$ since the algebras $S(\g) \otimes \twedge  (\p)$ and $U(\g) \otimes C(\p)$ are isomorphic as $K$--modules, and the algebra $S(\g) \otimes \twedge  (\p)$ is easier to study.

\bigskip

Let $G$ be the Lie group 
\[
SO(4,1) = \{ g \in GL(5, \mathbb{R}) \, | \, g^{\tau} \gamma g = \gamma \},
\]
where 
\[
\gamma = \begin{pmatrix}1 & 0 & 0 & 0 & 0\cr 0 & 1 & 0 & 0 & 0\cr 0 & 0 & 1 & 0 & 0\cr 0 & 0& 0 & 1 & 0\cr 0 & 0 & 0 & 0 & -1\end{pmatrix}.
\] 
The (real) Lie algebra of $G$ is 
\[
\g_0 = \mathfrak{so}(4,1)= \{ x \in \mathfrak{gl}(5, \mathbb{R}) \, | \, x^{\tau} = - \gamma x \gamma, \quad \text{tr} X = 0 \}.
\] 
We use the following basis for the complexification $\g$ of $\frg_0$:
\begin{gather*}
H_1 = i e_{12} - i e_{21},\quad H_2  = i e_{34} - i e_{43}, \\ 
E_1  = \frac{1}{2}(e_{13} - e_{24} - ie_{23} - ie_{14} - e_{31} + e_{42} + ie_{32} + ie_{41}), \\
E_2  = \frac{1}{2}(e_{13} + e_{24} - ie_{23} + ie_{14} - e_{31} - e_{42} + ie_{32} - ie_{41}), \\
F_1  = \frac{-1}{2}(e_{13} - e_{24} + ie_{23} + ie_{14} - e_{31} + e_{42} - ie_{32} - ie_{41}), \\
F_2  = \frac{-1}{2}(e_{13} + e_{24} + ie_{23} - ie_{14} - e_{31} - e_{42} - ie_{32} + ie_{41}), \\
E_3  = e_{15} - ie_{25} + e_{51} - ie_{52}, \\
E_4  = e_{35} - ie_{45} + e_{53} - ie_{54}, \\
F_3  = e_{15} + ie_{25} + e_{51} + ie_{52}, \\
F_4  = e_{35} + ie_{45} + e_{53} + ie_{54}.
\end{gather*}
where $e_{ij}$ denotes the matrix with the $ij$ entry equal to $1$ and all other entries equal to $0$. 
The commutation relations are given by the following table:
\begin{table}[ht]
\caption{commutator table}
\label{tab:commtable}
\begin{tabular}{|r|| r| r| r| r| r| r| r| r| r|}
  \hline
        & $H_2$ & $E_1$ & $E_2$ & $F_1$         & $F_2$       & $E_3$ & $E_4$  & $F_3$  & $F_4$ \\ \hline \hline
  $H_1$ & $0$   & $E_1$ & $E2$  & $-F_1$        & $-F_2$      & $E_3$ & $0$    & $-F_3$ & $0$   \\ \hline
  $H_2$ &       & $E_1$ & $-E2$ & $-F_1$        & $F_2$       & $0$   & $E_4$  & $0$    & $-F_4$   \\ \hline
  $E_1$ &       &       & $0$   & $H_1 + H_2$   & $0$         & $0$   & $0$    & $-E_4$ & $E_3$   \\ \hline
  $E_2$ &       &       &       & $0$           & $H_1 - H_2$ & $0$   & $E_3$  & $-F_4$ & $0$   \\ \hline
  $F_1$ &       &       &       &               & $0$         & $F_4$ & $-F_3$ & $0$    & $0$   \\ \hline
  $F_2$ &       &       &       &               &             & $E_4$ & $0$    & $0$    & $-F_3$   \\ \hline
  $E_3$ &       &       &       &               &             &       & $2E_1$ & $2H_1$ & $2E_2$   \\ \hline
  $E_4$ &       &       &       &               &             &       &        & $2F_2$ & $2H_2$   \\ \hline
  $F_3$ &       &       &       &               &             &       &        &        & $-2F_1$   \\ \hline
\end{tabular}
\end{table}

Let $\g = \ka \oplus \p$ be the Cartan decomposition of $\g$ corresponding to the Cartan involution $\theta (X) = - X^{\tau}$. Then
\[
\ka = \cspan  \{ H_1, H_2, E_1, E_2, F_1, F_2 \} \cong \mathfrak{so}(4, \mathbb{C}),\text{ and } \p = \cspan  \{ E_3, E_4, F_3, F_4 \}.
\]
We have $\ka = \ka_1 \oplus \ka_2$, where 
\[
\ka_1 = \cspan\{ H_1 + H_2, E_1, F_1 \} \simeq \mathfrak{sl}(2, \mathbb{C}), \quad \ka_2 = \cspan\{ H_1 - H_2, E_2, F_2\} \simeq \mathfrak{sl}(2, \mathbb{C})
\]
with $H_1 + H_2$, $E_1$ and $F_1$ (resp. $H_1 - H_2$, $E_2$ and $F_2$) corresponding to the standard basis of $\frsl(2,\mathbb{C})$. Algebras $\ka_1$ and $\ka_2$ mutually commute. 
We set
\begin{align*}
a_1 & = (H_1 + H_2)^2 + 4 E_1F_1 \in S(\ka_1) \subset S(\g) \\
a_2 & = (H_1 - H_2)^2 + 4 E_2F_2 \in S(\ka_2) \subset S(\g).
\end{align*}
Note that $a_1$ (respectively $a_2$) symmetrizes to a multiple of the Casimir element of $U(\ka_1)$ (respectively $U(\ka_2)$), and that they are both $K$--invariant.

From \cite[Lemma~2.1]{Pr1} it follows that 
\begin{equation}\label{S_k1}
S(\ka_1)^{\ka_1} = \mathbb{C}[a_1] \qquad\text{and}\qquad S(\ka_2)^{\ka_2} = \mathbb{C}[a_2].
\end{equation}
and also 
\begin{equation}\label{S_k2}
S(\ka_1) = S(\ka_1)^{\ka_1} \otimes \mathcal{H}_{\ka_1} \qquad\text{and}\qquad S(\ka_2) = S(\ka_2)^{\ka_2} \otimes \mathcal{H}_{\ka_2},
\end{equation}
with the spaces of harmonics $\mathcal{H}_{\ka_1}$ and $\mathcal{H}_{\ka_2}$ decomposing as 
\[
\mathcal{H}_{\ka_1} = \bigoplus_{n \in \mathbb{Z}_{+}} V_{(n,n)}\qquad \text{and}\qquad \mathcal{H}_{\ka_2} = \bigoplus_{n \in \mathbb{Z}_{+}} V_{(n,-n)}.
\] 
Here $V_{(n,n)}$ denotes the $\mathfrak{so}(4, \mathbb{C})$--module with the highest weight vector $E_{1}^{n}$, on which $H_1$ and $H_2$ both act by $n$. Similarly, 
$V_{(n,-n)}$ denotes the $\mathfrak{so}(4, \mathbb{C})$--module with the highest weight vector $E_{2}^{n}$, on which $H_1$ acts by $n$ and $H_2$ by $-n$.

Similarly as in the proof of \cite[Lemma~2.3.]{Pr1}, we have 
\[
S^{n}(\p) = V_{(n,0)} \oplus (E_3 F_3 + E_4 F_4) S^{n-2}(\p),
\]
where $V_{(n,0)}$ is the $\mathfrak{so}(4, \mathbb{C})$--module with the highest weight vector $E_{3}^{n}$. Let us denote 
\[
b = E_3 F_3 + E_4 F_4.
\]
One can easily check that  $b$ is $K$--invariant. Now we have
\begin{equation}\label{S_p}
S(\p) = S(\p)^{K} \otimes \mathcal{H}_{\p} = \mathbb{C}[b] \otimes \mathcal{H}_{\p},
\end{equation}
where 
\[
\mathcal{H}_{\p} = \bigoplus_{n \in \mathbb{Z}_{+}} V_{(n,0)}.
\]
From (\ref{S_k1}), (\ref{S_k2}) and (\ref{S_p}) it follows that
\[
S(\g) \otimes \twedge(\p) = \mathbb{C}[a_1, a_2, b] \otimes \mathcal{H}_{\ka_1} \otimes \mathcal{H}_{\ka_2} \otimes \mathcal{H}_{\p} \otimes \twedge(\p).
\]

The $\frk$-submodules of $\twedge(\frp)$ are: 

\begin{align*}
& \twedge^0(\frp) \cong  \twedge^4(\frp) \cong  V_{(0,0)} \\
& \twedge^1(\frp)\cong\twedge^3(\frp) \cong   V_{(1,0)} \\
& \cspan   \{ E_3 \wedge E_4, E_3 \wedge F_3 + E_4 \wedge F_4, F_3 \wedge F_4 \} \cong   V_{(1,1)} \\
& \cspan   \{ E_3 \wedge F_4, E_3 \wedge F_3 - E_4 \wedge F_4, E_4 \wedge F_3 \} \cong   V_{(1, -1)}.
\end{align*}
It follows that $\twedge(\frp)$ decomposes under $\frk$ as 
\begin{align*}
\twedge  (\p) & =  \overbrace{V_{(0, 0)}}^{0} \\
& \oplus  \overbrace{V_{(1,0)}}^{1} \\
& \oplus  \overbrace{V_{(1,1)}}^{2} \oplus \overbrace{V_{(1,-1)}}^{2} \\
& \oplus  \overbrace{V_{(1,0)}}^{3} \\
& \oplus  \overbrace{V_{(0,0)}}^{4},
\end{align*}
where each of the numbers over braces denotes the degree in which the corresponding $\frk$-module is appearing.

If $V$ is a $\ka_1$--module and $W$ is a $\ka_2$--module, then we  denote by $V \boxtimes W$ the external tensor product of $V$ and $W$. As a vector space, $V \boxtimes W$ is equal to the direct sum of $V$ and $W$, and the action of $\ka=\ka_1\oplus\ka_2$--module is defined so that $\ka_1$ acts on $V$ and $\ka_2$ acts on $W$. Then we have 
\[
V_{(n,m)} \simeq V_{n+m} \boxtimes V_{n-m} \quad\text{ as $\ka$--modules,}
\]
where $V_k$ denotes the $\frsl(2,\mathbb{C})$--module with highest weight $k$. It follows that 
\begin{align*}
\mathcal{H}_{\ka_1} \otimes \mathcal{H}_{\ka_2} 
& = \left ( \bigoplus_{n \in \mathbb{Z}_{+}} V_{(n,n)} \right ) \otimes \left ( \bigoplus_{m \in \mathbb{Z}_{+}} V_{(m,-m)} \right ) \\
& \simeq \left ( \bigoplus_{n \in \mathbb{Z}_{+}} V_{2n} \boxtimes V_{0}  \right ) \otimes \left ( \bigoplus_{m \in \mathbb{Z}_{+}} V_{0} \boxtimes V_{2m} \right ) \\
& \simeq \bigoplus_{n,m \in \mathbb{Z}_{+}}\left (  V_{2n} \otimes V_0 \right ) \boxtimes \left (  V_{0} \otimes V_{2m} \right ) \\
& \simeq \bigoplus_{n,m \in \mathbb{Z}_{+}} V_{2n} \boxtimes V_{2m} \\
& \simeq \bigoplus_{n,m \in \mathbb{Z}_{+}} V_{(n+m, n-m)}.
\end{align*}
We have seen that $\mathcal{H}_{\p} = \bigoplus_{k \in \mathbb{Z}_{+}} V_{(k,0)}$. On the other hand, 
\begin{align}\label{product}
& V_{(n+m, n-m)} \otimes V_{(k, 0)}
 = \left ( V_{2n} \boxtimes V_{2m} \right ) \otimes \left ( V_k \boxtimes V_k \right ) \\
 &\qquad  = \left ( V_{2n} \otimes V_{k} \right ) \boxtimes \left ( V_{2m} \otimes V_k \right ) \notag \\
&\qquad = \left ( V_{2n+k} \oplus V_{2n+k-2} \oplus \cdots \oplus V_{|2n - k|} \right ) \boxtimes \left ( V_{2m+k} \oplus V_{2m+k-2} \oplus \cdots \oplus V_{|2m - k|} \right ) \notag \\
& \qquad = \left ( V_{2n + k} \boxtimes V_{2m + k} \right ) \oplus \left ( V_{2n + k} \boxtimes V_{2m + k - 2} \right ) \oplus \cdots \oplus \left ( V_{|2n - k|} \boxtimes V_{|2m - k|} \right ). \notag
\end{align}
Furthermore, we have
\begin{align*}
\twedge (\p) & = \left ( \overbrace{V_0 \boxtimes V_0}^{0} \right ) \\
& \oplus \left ( \overbrace{V_1 \boxtimes V_1}^{1} \right ) \\
& \oplus \left ( \overbrace{V_2 \boxtimes V_0}^{2} \right ) \oplus \left ( \overbrace{V_0 \boxtimes V_2}^{2} \right ) \\
& \oplus \left ( \overbrace{V_1 \boxtimes V_1}^{3} \right ) \\
& \oplus \left ( \overbrace{V_0 \boxtimes V_0}^{4} \right ).
\end{align*}
where as before, the numbers over braces denote the degrees in $\twedge(\p)$.

In order to get an invariant, we can tensor $V_0 \boxtimes V_0$ only with $V_0 \boxtimes V_0$. In (\ref{product}), we have $V_0 \boxtimes V_0$ only if $|2n - k| = |2m - k| = 0$, that is, $2n = 2m = k$. In that case, we have one invariant in degree $4n$ and one invariant in degree $4n + 4$, where $n \geq 0$. 

Furthermore, $V_1 \boxtimes V_1$ can be tensored only with $V_1 \boxtimes V_1$, and in (\ref{product}) we have $V_1 \boxtimes V_1$ only for $|2n - k| = |2m - k| = 1$. We have several cases:
\begin{itemize}
\item If $n = m, k = 2n + 1$, we have one invariant in  degree $4n + 2$ and one invariant in degree $4n + 4$, where $n\geq 0$. 
\item If $n = m, k = 2n - 1$, we have one invariant in  degree $4n$ and one invariant in degree $4n + 2$, where $n\geq 1$.
\item If $m = n + 1, k = 2n + 1$, we have one invariant in degree $4n + 3$ and one invariant in degree $4n + 5$, where $n\geq 0$. 
\item If $m = n - 1, k = 2n - 1$, we have one invariant in degree $4n - 1$ and one invariant in degree $4n + 1$, where $n\geq 1$.
\end{itemize}

The exterior product $V_2 \boxtimes V_0$ can be tensored only with $V_2 \boxtimes V_0$. In (\ref{product}), we have $V_2 \boxtimes V_0$ only for $|2n - k| = 2, |2m - k| = 0$ or $|2n - k| + 2 = 2, |2m - k| = 0$. The cases are:
\begin{itemize}
\item If $k = 2n$, we have one invariant in degree $4n + 2$, where $n \geq 1$. 
\item If $2 n - k = 2$, we have one invariant in degree $4n - 1$, where $n \geq 1$. 
\item If $2n - k = -2$, we have one invariant in degree $4n + 5$, where $n\geq 0$. 
\end{itemize}

Finally, $V_0 \boxtimes V_2$ can be tensored only with $V_0 \boxtimes V_2$, and in (\ref{product}) we have $V_0 \boxtimes V_2$ only for $|2n - k| = 0, |2m - k| = 2$ or $|2n - k| = 0, |2m - k|  + 2 = 2$. The cases are:
\begin{itemize}
\item If $m = n$, we have one invariant in degree $4n + 2$, where $n \geq 1$. 
\item If $2 m - k = 2$, we have one invariant in degree $4n + 3$, where $n \geq 0$. 
\item If $2m - k = -2$, we have one invariant in degree $4n + 1$, where $n\geq 1$. 
\end{itemize}

From all the above we conclude that we have the following table for the $K$--invariants in $\mathcal{H}_{\ka_1} \otimes \mathcal{H}_{\ka_2} \otimes \mathcal{H}_{\p} \otimes \twedge(\p)$: 

\begin{center}
\label{table:degree}
\begin{tabular}{|c|c|}
  \hline
  degree & number of linearly independent invariants \\ \hline
  $0$ & $1$ \\
  $1$ & $0$ \\
  $2$ & $1$ \\
  $4k, \quad k \geq 1$ & $4$ \\
  $4k+1, \quad k \geq 1$ & $4$ \\
  $4k+2, \quad k \geq 1$ & $4$ \\
  $4k+3, \quad k \geq 1$ & $4$ \\
  \hline
\end{tabular}
\end{center}
We have proved:
\begin{prop}
\label{number inv}
The algebra of $K$--invariants in $S(\frg)\otimes\twedge(\frp)$ can be written as
\[
(S(\frg)\otimes\twedge(\frp))^K=\bbC[a_1,a_2,b]\otimes (\mathcal{H}_{\ka_1} \otimes \mathcal{H}_{\ka_2} \otimes \mathcal{H}_{\p} \otimes \twedge(\p))^K.
\]
The number of invariants in $(\mathcal{H}_{\ka_1} \otimes \mathcal{H}_{\ka_2} \otimes \mathcal{H}_{\p} \otimes \twedge(\p))^K$ in each degree is given by the above table.
\end{prop}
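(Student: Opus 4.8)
The plan is to establish the proposition in two steps, matching its two assertions.

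\emph{The factorization.} By (\ref{S_k1})--(\ref{S_p}) the multiplication maps give $K$-module isomorphisms $S(\ka_1)\cong\bbC[a_1]\otimes\mathcal{H}_{\ka_1}$, $S(\ka_2)\cong\bbC[a_2]\otimes\mathcal{H}_{\ka_2}$ and $S(\p)\cong\bbC[b]\otimes\mathcal{H}_{\p}$; indeed each $\mathcal{H}_{\ka_i}$ is $K$-stable and each $a_i$ (as well as $b$) is a $K$-invariant, because $K$ is connected and its adjoint action preserves the ideals $\ka_1,\ka_2$. Multiplying these together, using $S(\g)\cong S(\ka_1)\otimes S(\ka_2)\otimes S(\p)$ as $K$-modules, and tensoring with $\twedge(\p)$ gives the $K$-module isomorphism $S(\g)\otimes\twedge(\p)\cong\bbC[a_1,a_2,b]\otimes N$ recorded just before the statement, where $N=\mathcal{H}_{\ka_1}\otimes\mathcal{H}_{\ka_2}\otimes\mathcal{H}_{\p}\otimes\twedge(\p)$. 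Since $\bbC[a_1,a_2,b]$ is a direct sum of copies of the trivial $K$-module and taking $K$-invariants commutes with direct sums, $(\bbC[a_1,a_2,b]\otimes N)^K=\bbC[a_1,a_2,b]\otimes N^K$, which is the first displayed formula. The isomorphism is moreover graded by total degree (polynomial degree in $S(\g)$ plus exterior degree in $\twedge(\p)$), so it remains to count the trivial constituents of $N$ in each degree.

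\emph{The degree count.} This is the computation carried out above, which it remains to assemble. Using $\mathcal{H}_{\ka_1}\otimes\mathcal{H}_{\ka_2}\cong\bigoplus_{n,m\in\bbZ_{+}}V_{(n+m,n-m)}$ with $V_{(n+m,n-m)}$ in degree $n+m$, $\mathcal{H}_{\p}\cong\bigoplus_{k\in\bbZ_{+}}V_{(k,0)}$ with $V_{(k,0)}$ in degree $k$, and the decomposition of $\twedge(\p)$ under $\ka$ into the types $V_0\boxtimes V_0$ (degrees $0$ and $4$), $V_1\boxtimes V_1$ (degrees $1$ and $3$), $V_2\boxtimes V_0$ (degree $2$) and $V_0\boxtimes V_2$ (degree $2$), one is reduced to the following observation. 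For a summand $V_a\boxtimes V_b$ of $\twedge(\p)$,
\[
(V_{2n}\boxtimes V_{2m})\otimes(V_k\boxtimes V_k)\otimes(V_a\boxtimes V_b)=(V_{2n}\otimes V_k\otimes V_a)\boxtimes(V_{2m}\otimes V_k\otimes V_b),
\]
and since $V_c\boxtimes V_d$ contains the trivial $\ka$-module if and only if $c=d=0$, and then with multiplicity one, the number of trivial summands here equals the multiplicity of $V_a$ in $V_{2n}\otimes V_k$ times the multiplicity of $V_b$ in $V_{2m}\otimes V_k$ — each factor being $0$ or $1$, and forced by the Clebsch--Gordan rule for $\frsl(2,\bbC)$ used in (\ref{product}). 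Solving these constraints for the four types yields exactly the four bulleted lists of occurrences above, with the degree attached to each occurrence; summing the contributions degree by degree, and keeping the bounds $n\ge 0$ or $n\ge 1$ that record when the relevant constituent actually appears, gives the numbers in the table.

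The only real work lies in this last bookkeeping: one must check that a summand of $\twedge(\p)$ of one type can contribute a trivial constituent only together with a constituent of $\mathcal{H}_{\ka_1}\otimes\mathcal{H}_{\ka_2}\otimes\mathcal{H}_{\p}$ of the matching type — immediate from $V_c\boxtimes V_d\supseteq V_0\boxtimes V_0\iff c=d=0$ — and that the contributions in the low degrees are collected without omission or double-counting. With that care, the four enumerations add up to the entries of the table, which proves the proposition.
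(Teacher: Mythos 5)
Your proposal is correct and follows essentially the same route as the paper: the paper's proof is precisely the preceding computation, namely the factorization $(S(\frg)\otimes\twedge(\frp))^K=\bbC[a_1,a_2,b]\otimes N^K$ coming from \eqref{S_k1}--\eqref{S_p}, followed by the degree-by-degree count of trivial constituents of $N$ using $V_{(n,m)}\simeq V_{n+m}\boxtimes V_{n-m}$ and the Clebsch--Gordan rule in each $\boxtimes$ factor. Your explicit remark that $V_c\boxtimes V_d$ contains the trivial module iff $c=d=0$ is exactly the paper's observation that each summand of $\twedge(\frp)$ can only pair with a constituent of the matching type, so the two arguments coincide.
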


The elements $a_1,a_2$ and $b$ of $S(\frg)$ can be viewed as elements of $S(\g) \otimes \twedge(\p)$ by the identification
$S(\frg)=S(\frg)\otimes 1$. We define further elements, which are all easily checked to be $K$-invariant:
\begin{align*}
 D  & =  E_3 \otimes F_3 + E_4 \otimes F_4 + F_3 \otimes E_3 + F_4 \otimes E_4, \quad (\text{ the Dirac operator})\\
 c  & = (2 (E_1 E_2 F_{3}^{2} - E_1 F_2 F_{4}^{2} + F_1 F_2 E_{3}^{2} - F_1 E_2 E_{4}^{2}) \\
 & - 2(H_1 - H_2)(E_1 F_3 F_4 + F_1 E_3 E_4) - 2(H_1 + H_2)(F_2 E_3 F_4 + E_2 F_3 E_4) \\
 & - (H_1 - H_2)(H_1 + H_2)(E_3 F_3 - E_4 F_4)) \otimes 1, \\
 d  & = 2 E_1 \otimes F_3 \wedge F_4 - (H_1 + H_2) \otimes (E_3 \wedge F_3 + E_4 \wedge F_4) - 2 F_1 \otimes E_3 \wedge E_4, \\
 e  & = 2 E_2 \otimes E_4 \wedge F_3 + (H_1 - H_2) \otimes (E_3 \wedge F_3 - E_4 \wedge F_4) + 2 F_2 \otimes E_3 \wedge F_4, \\
 f  & = 2 (E_1 F_3 \otimes F_4 - E_1 F_4 \otimes F_3) \\ 
 & -(H_1 + H_2) (E_3 \otimes F_3 + E_4 \otimes F_4 - F_3 \otimes E_3 - F_4 \otimes E_4) \\
 & - 2 (F_1 E_3 \otimes E_4 - F_1 E_4 \otimes E_3), \\
 g  & = -2 (E_2 F_3 \otimes E_4 - E_2 E_4 \otimes F_3)\\ 
 & + (H_1 - H_2)(E_3 \otimes F_3 - E_4 \otimes F_4 - F_3 \otimes E_3 + F_4 \otimes E_4) \\
 & + 2(F_2 E_3 \otimes F_4 - F_2 F_4 \otimes E_3) \\
 h  & = 2 (E_1 E_2 F_3 \otimes F_3 - E_1 F_2 F_4 \otimes F_4 + F_1 F_2 E_3 \otimes E_3 - F_1 E_2 E_4 \otimes E_4) \\
  & - (H_1 - H_2)(E_1 F_3 \otimes F_4 + E_1 F_4 \otimes F_3 + F_1 E_3 \otimes E_4 + F_1 E_4 \otimes E_3) \\
  & - (H_1 + H_2)(F_2 E_3 \otimes F_4 + F_2 F_4 \otimes E_3 + E_2 F_3 \otimes E_4 + E_2 E_4 \otimes F_3) \\
  & - \frac{1}{2}(H_1 - H_2)(H_1 + H_2)(E_3 \otimes F_3 + F_3 \otimes E_3 - E_4 \otimes F_4 - F_4 \otimes E_4), \\
 i  & = 1 \otimes E_3 \wedge E_4 \wedge F_3 \wedge F_4, \\
 j  & = E_3 \otimes E_4 \wedge F_3 \wedge F_4 - E_4 \otimes E_3 \wedge F_3 \wedge F_4\\ & + F_3 \otimes E_3 \wedge E_4 \wedge F_4 - F_4 \otimes E_3 \wedge E_4 \wedge F_3.
\end{align*}
\begin{prop}\label{basis}
Let $S$ and $T$ be the following subsets of $(S(\g) \otimes \twedge(\p))^{K}$:
\begin{align*}
& S = \{ a_{1}^{n_1} a_{2}^{n_2} b^{n_3} c^{n_4} \, | \, n_1, n_2, n_3, n_4 \in \mathbb{N}_{0} \} \\
& T = \{ 1, D, d, e, f, g, h, i, j, Dd, De, Df, Dg, fg, Dh, dg \}.
\end{align*}
Then the set $S \cdot T$ of products of elements of $S$ and $T$ in the algebra $S(\g) \otimes \twedge(\p)$ is a
basis for $(S(\g) \otimes \twedge(\p))^{K}$.
\end{prop}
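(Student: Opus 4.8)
The plan is to leverage Proposition~\ref{number inv}, which tells us \emph{exactly} how many linearly independent $K$-invariants sit in $(\mathcal{H}_{\ka_1}\otimes\mathcal{H}_{\ka_2}\otimes\mathcal{H}_\p\otimes\twedge(\p))^K$ in each degree, and match that count against the spanning set $S\cdot T$. Since the decomposition $(S(\frg)\otimes\twedge(\p))^K=\bbC[a_1,a_2,b]\otimes(\mathcal{H}_{\ka_1}\otimes\mathcal{H}_{\ka_2}\otimes\mathcal{H}_\p\otimes\twedge(\p))^K$ exhibits $\bbC[a_1,a_2,b]$ as a polynomial ring acting freely, it suffices to show two things: first, that each element of $T$, after projecting away the $\bbC[a_1,a_2,b]$-part, lands in the harmonic piece and that these projections are linearly independent in the appropriate degrees; and second, that the generating function of $S\cdot T$ graded by degree agrees with the product of $\frac{1}{(1-t^2)(1-t^2)(1-t^2)}$ (from $a_1,a_2,b$, all of degree~$2$) — wait, $b$ has degree~$2$ as well since $b=E_3F_3+E_4F_4\in S^2(\p)$ — times the generating function of $T$, and that this matches the table in Proposition~\ref{number inv}.

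\textbf{Step 1: degrees of the generators.} First I would record the degree (total degree in $S(\frg)\otimes\twedge(\p)$, i.e.\ polynomial degree plus exterior degree) of every element of $T$: $1$ has degree $0$; $D$ has degree $2$; $d,e$ have degree $3$; $f,g$ have degree $3$; $h$ has degree $4$; $i$ has degree $4$; $j$ has degree $4$. Then $Dd,De,Df,Dg$ have degree $5$; $fg$ has degree $6$; $Dh$ has degree $6$; $dg$ has degree $6$. Together with $a_1,a_2,b$ each contributing a factor $(1-t^2)^{-1}$, I would compute $\sum_{w\in T} t^{\deg w}=1+t^2+4t^3+3t^4+4t^5+3t^6$ — but I must double-check this against the table, because the table says the invariant count stabilizes at $4$ in \emph{every} residue class mod~$4$ for large degree, which forces $\bigl(\sum_{w\in T}t^{\deg w}\bigr)/(1-t^2)^3$ to have all coefficients eventually equal to~$4$. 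This is the combinatorial consistency check that must come out exactly right; if the naive count of $T$ gives the wrong polynomial, some element of $T$ is redundant modulo $\bbC[a_1,a_2,b]$ or I have miscounted, and the statement of the proposition encodes the correct answer.

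\textbf{Step 2: harmonicity and independence.} Next I would verify that each element of $T\setminus\{1\}$ is, modulo the ideal generated by $a_1,a_2,b$, a genuine harmonic invariant — equivalently, that no element of $T$ is a $\bbC[a_1,a_2,b]$-combination of lower elements of $T$. Concretely, project each $w\in T$ onto $\mathcal{H}_{\ka_1}\otimes\mathcal{H}_{\ka_2}\otimes\mathcal{H}_\p\otimes\twedge(\p)$ using the Kostant-type decompositions \eqref{S_k2}, \eqref{S_p}, and exhibit in each relevant degree a set of projections whose cardinality matches the table entry and which are manifestly linearly independent (e.g.\ by examining leading terms with respect to a fixed monomial order, or by evaluating on weight vectors). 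For the degrees $0$ and $2$ this is immediate ($1$ and $D$ respectively, noting $D$ has a nonzero harmonic component since $b$ is the \emph{only} invariant in $S^2(\p)$ and $D\ne b\otimes 1$). For $4k,4k+1,4k+2,4k+3$ with $k\ge1$ I would display the four products from $S\cdot T$ in each class — for instance in degree $4$: $b\cdot a$-type products $a_1 b, a_2 b, b^2$ together with $h,i,j$ and $Dd$-type... here I must be careful to pick exactly four independent ones and show the table is saturated.

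\textbf{Main obstacle.} The hard part is Step~2: proving the projections of the sixteen elements of $T$ onto the harmonic space are linearly independent in the graded sense, i.e.\ that $S\cdot T$ is not just a spanning set but a \emph{basis}. Spanning will follow once Step~1 shows the Hilbert series of $\bbC[a_1,a_2,b]\text{-span}(T)$ matches that of $(S(\frg)\otimes\twedge(\p))^K$ from Proposition~\ref{number inv}; but matching Hilbert series only gives a basis if we \emph{also} know linear independence of $S\cdot T$, which requires checking that the products $a_1^{n_1}a_2^{n_2}b^{n_3}c^{n_4}\cdot w$ for distinct $(n_1,n_2,n_3,n_4,w)$ are linearly independent. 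The subtlety is that $c$ has degree $4$ (it is $\sum\text{(degree-4 in }S(\frg))\otimes 1$), so $c$ contributes its own factor and could conceivably coincide, modulo $a_1,a_2,b$, with one of the harmonic invariants built from $T$; I would need to verify that $c$ (projected to harmonics) is genuinely a new generator not expressible via $a_1,a_2,b$ and $T$, which pins down the precise structure. I expect to handle this by an explicit highest-weight-vector computation: locate the highest weight vectors of the relevant $V_{(n,m)}\otimes V_{(k,0)}\otimes(\text{piece of }\twedge\p)$ copies that contain the trivial $K$-type, and check that the advertised generators hit all of them. Once independence and spanning both hold, $S\cdot T$ is a basis and the proposition follows.
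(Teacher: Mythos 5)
Your overall strategy is the same as the paper's: establish linear independence of $S\cdot T$ directly, and get spanning for free by matching the graded count of $S\cdot T$ against the table of Proposition~\ref{number inv}. Your degree bookkeeping for $T$ is correct ($1+t^2+4t^3+3t^4+4t^5+3t^6$), and the counting step does work out: since $c$ has degree $4$, the set $\{c^{n_4}\}\cdot T$ contributes $1,0,1$ invariants in degrees $0,1,2$ and exactly $4$ in every degree $\ge 3$, which is precisely the table for $(\mathcal{H}_{\ka_1}\otimes\mathcal{H}_{\ka_2}\otimes\mathcal{H}_{\p}\otimes\twedge(\p))^K$; tensoring with $\bbC[a_1,a_2,b]$ then matches the full Hilbert series. (Your intermediate formulation is off here: the table of Proposition~\ref{number inv} counts invariants in the \emph{harmonic} factor only, so the correct comparison is $\bigl(\sum_{w\in T}t^{\deg w}\bigr)/(1-t^4)$ against that table, not $\bigl(\sum_{w\in T}t^{\deg w}\bigr)/(1-t^2)^3$ against anything --- the latter has unbounded coefficients.)

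The genuine gap is that the linear independence of $S\cdot T$, which you correctly identify as the ``main obstacle,'' is never actually proved; you only list candidate techniques (leading terms, highest weight vectors) without executing any of them. This is not a routine verification --- it is the bulk of the paper's proof. The paper handles it by splitting $T$ into four groups according to which component of the $\twedge(\p)$-factor one inspects (the $\otimes 1$ part, the coefficient of $F_3$, of $E_3\wedge F_3$, and of $E_4\wedge F_3\wedge F_4$ respectively), and then, within each group, repeatedly extracting the monomials free of a chosen quadruple of variables (e.g.\ $H_1-H_2$, $E_1$, $E_3$, $F_3$) so that each of $a_1,a_2,b,c$ contributes a single surviving monomial; linear independence then follows from the Poincar\'e--Birkhoff--Witt theorem. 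In particular this settles your worry about whether $c$ is ``genuinely new'' modulo $a_1,a_2,b$: the surviving monomial $(-2F_1E_2E_4^2)^{n_4}$ of $c^{n_4}$ separates the powers of $c$ from everything else. Without some such explicit argument your proposal establishes only that the cardinalities match in each degree, which by itself proves neither spanning nor independence.
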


\begin{proof}
We first prove the linear independence of the set $S \cdot T$. Notice that it is enough to prove the linear independence of following sets:
\begin{enumerate}[a)]
\item $S$,
\item $S \cdot \{ D \} \cup S \cdot \{ f \} \cup S \cdot \{ g \} \cup S \cdot \{ h \}$,
\item $S \cdot \{ d \} \cup S \cdot \{ Df \} \cup S \cdot \{ e \} \cup S \cdot \{ Dg \} \cup S \cdot \{ Dh \} \cup S \cdot \{ fg \}$,
\item $S \cdot \{ j \} \cup S \cdot \{ Dd \} \cup S \cdot \{ De \} \cup S \cdot \{ dg \}$.
\end{enumerate}
The rest of the independence then follows by considering just the second factors in the tensor products.

\begin{enumerate}[a)]
\item Let $\sum_{i \in \mathcal{I}} \lambda_{i} \cdot a_{1}^{n_{1,i}} a_{2}^{n_{2,i}} b^{n_{3, i}} c^{n_{4, i}} = 0$. In the expansion of the summand $a_{1}^{n_{1,i}} a_{2}^{n_{2,i}} b^{n_{3, i}} c^{n_{4, i}}$ we consider the terms without $H_1 - H_2$, $E_1$, $E_3$, $F_3$. There is only one such term and it is 
\[
(H_1 + H_2)^{2 n_{1, i}} (4 E_2 F_2)^{n_{2, i}}(E_4 F_4)^{n_{3, i}}(-2 F_1 E_2 E_{4}^{2})^{n_{4, i}}. 
\]
We have 
\[
\sum_{i \in \mathcal{I}} \lambda_{i} (H_1 + H_2)^{2 n_{1, i}} (4 E_2 F_2)^{n_{2, i}}(E_4 F_4)^{n_{3, i}}(-2 F_1 E_2 E_{4}^{2})^{n_{4, i}} = 0,
\] 
that is, 
\[
\sum_{i \in \mathcal{I}} \lambda_{i} 4^{n_{2, i}} \cdot (-2)^{n_{4, i}} (H_1 + H_2)^{2 n_{1, i}} E_{2}^{n_{2, i} + n_{4, i}} E_{4}^{n_{3, i} + 2 n_{4, i}} F_{1}^{n_{4, i}} F_{2}^{n_{2, i}} F_{4}^{n_{3, i}} = 0.
\]
It follows from the Poincar\'{e}-Birkhoff-Witt theorem that $\lambda_i = 0$ for all $j \in \{ 1, \cdots, l \}$.

\item We consider summands of the form $\cdot \otimes F_3$. It is enough to show the linear independence of the set
\begin{align*}
& \{a_{1}^{n_1} a_{2}^{n_2} b^{n_3} c^{n_4} E_3 \, | \, n_1, n_2, n_3, n_4 \in \mathbb{N}_{0} \}\ \cup \\
& \{a_{1}^{n_1} a_{2}^{n_2} b^{n_3} c^{n_4} \left (- 2 E_1 F_4 - (H_1 + H_2) E_3 \right ) \, | \, n_1, n_2, n_3, n_4 \in \mathbb{N}_{0} \} \ \cup \\
& \{a_{1}^{n_1} a_{2}^{n_2} b^{n_3} c^{n_4} \left ( 2 E_2 E_4 + (H_1 - H_2) E_3 \right ) \, | \, n_1, n_2, n_3, n_4 \in \mathbb{N}_{0} \} \ \cup \\
& \{a_{1}^{n_1} a_{2}^{n_2} b^{n_3} c^{n_4} ( 2 E_1 E_2 F_3 - (H_1 - H_2) E_1 F_4 - (H_1 + H_2) E_2 E_4 - \\
&\qquad\qquad \frac{1}{2}(H_1 - H_2)(H_1 + H_2) E_3 ) \, | \, 
 n_1, n_2, n_3, n_4 \in \mathbb{N}_{0} \}.
\end{align*}

As in case $a)$, we consider the terms without $H_1 - H_2$, $E_1$, $E_3$, $F_3$. We have
\begin{align*}
& \sum_{k \in \mathcal{K}} \lambda_k (H_1 + H_2)^{2 n_{1, k}} (4 E_2 F_2)^{n_{2, k}}(E_4 F_4)^{n_{3, k}}(-2 F_1 E_2 E_{4}^{2})^{n_{4, k}} \cdot (2 E_2 E_4) + \\
& \sum_{l \in \mathcal{L}} \lambda_l (H_1 + H_2)^{2 n_{1, l}} (4 E_2 F_2)^{n_{2, l}}(E_4 F_4)^{n_{3, l}}(-2 F_1 E_2 E_{4}^{2})^{n_{4, l}} \left ( - (H_1 + H_2) E_2 E_4 \right ) = 0. 
\end{align*}
It follows that
\[
\sum_{k \in \mathcal{K}} \lambda_k (H_1 + H_2)^{2 n_{1, k}} (4 E_2 F_2)^{n_{2, k}}(E_4 F_4)^{n_{3, k}}(-2 F_1 E_2 E_{4}^{2})^{n_{4, k}} = 0
\]
and
\[
\sum_{l \in \mathcal{L}} \lambda_l (H_1 + H_2)^{2 n_{1, l} + 1} (4 E_2 F_2)^{n_{2, l}}(E_4 F_4)^{n_{3, l}}(-2 F_1 E_2 E_{4}^{2})^{n_{4, l}} = 0. 
\]

By the same arguments as in case $a)$, we get $\lambda_k = 0$ for all $k \in \mathcal{K}$ and $\lambda_l = 0$ for all $l \in \mathcal{L}$. Then we have 
\[
\sum_{i \in \mathcal{I}}\lambda_i a_{1}^{n_{1, i}} a_{2}^{n_{2, i}} b^{n_{3, i}} c^{n_{4, i}} E_3 + \sum_{j \in \mathcal{J}} \lambda_j a_{1}^{n_{1, j}} a_{2}^{n_{2, j}} b^{n_{3, j}} c^{n_{4, j}} \left ( - 2 E_1 F_4 - (H_1 + H_2) E_3 \right ) = 0.
\] 
Now we consider the terms without $H_1 + H_2$, $E_2$, $E_4$, $F_4$. We have
\[
\sum_{i \in \mathcal{I}} \lambda_i  (4 E_1 F_1)^{n_{1, i}} (H_1 - H_2)^{2 n_{2, i}} (E_3 F_3)^{n_{3, i}} (2 F_1 F_2 E_{3}^{2})^{n_{4, i}} E_3 = 0,
\]
which implies
\[
\sum_{i \in \mathcal{I}} \lambda_i  \cdot 4^{n_{1, i}} \cdot 2^{n_{4, i}} (H_1 - H_2)^{2 n_{2, i}} E_{1}^{n_{1, i}} E_{3}^{n_{3, i} + 2 n_{4, i}} F_{1}^{n_{1, i} + n_{4, i}} F_{2}^{n_{4, i}} F_{3}^{n_{3, i}} = 0.
\]
Hence, $\lambda_i = 0$ for all $i \in \mathcal{I}$.
So we have
\[
\sum_{j \in \mathcal{J}} \lambda_j a_{1}^{n_{1, j}} a_{2}^{n_{2, j}} b^{n_{3, j}} c^{n_{4, j}} = 0.
\]
Now $a)$ implies that $\lambda_j = 0$ for all $j \in \mathcal{J}$.

\item In this case we consider summands of the form $\cdot \otimes E_3 \wedge F_3$. It is enough to prove the linear independence of the set
\begin{align*}
 S & \cdot (-(H_1 + H_2)) \quad \cup \\
 S & \cdot (-2 E_1 F_4 F_3 - (H_1 + H_2) E_3 F_3 - (H_1 + H_2) E_3 F_3 - 2 F_1 E_3 E_4) \quad \cup \\
 S & \cdot (H_1 - H_2) \quad \cup \\
 S & \cdot (F_3 (2 E_2 E_4 + (H_1 - H_2) E_3) + E_3 ((H_1 - H_2) F_3 + 2 F_2 F_4)) \quad \cup \\
 S & \cdot ( F_3(2 E_1 E_2 F_3 - (H_1 - H_2)E_1 F_4 - (H_1 + H_2)E_2 E_4 
   - \frac{1}{2}(H_1 - H_2)(H_1 + H_2)E_3 ) \\
   & + E_3 (- 2 F_1 F_2 E_3 + (H_1 - H_2)F_1 E_4 + (H_1 + H_2)F_2 F_4 
     + \frac{1}{2}(H_1 - H_2)(H_1 + H_2)F_3 ) \quad \cup \\
 S & \cdot ((H_1 + H_2)F_3 + 2 F_1 E_4)(2 E_2 E_4 + (H_1 - H_2)E_3) \\
   & + (2 E_1 F_4 + (H_1 + H_2)E_3)(- (H_1 - H_2)F_3 - 2 F_2 F_4)).
\end{align*}
Let
\begin{align*}
& \sum_{i \in \mathcal{I}} \lambda_i a_{1}^{n_{1, i}} a_{2}^{n_{2, i}} b^{n_{3, i}} c^{n_{4, i}}(-(H_1 + H_2)) + \\
& \sum_{j \in \mathcal{J}} \lambda_j a_{1}^{n_{1, j}} a_{2}^{n_{2, j}} b^{n_{3, j}} c^{n_{4, j}} (-2 E_1 F_4 F_3 - (H_1 + H_2) E_3 F_3 - (H_1 + H_2) E_3 F_3 - 2 F_1 E_3 E_4) + \\
& \sum_{k \in \mathcal{K}} \lambda_k a_{1}^{n_{1, k}} a_{2}^{n_{2, k}} b^{n_{3, k}} c^{n_{4, k}} (H_1 - H_2) +  \\
& \sum_{l \in \mathcal{L}} \lambda_l a_{1}^{n_{1, l}} a_{2}^{n_{2, l}} b^{n_{3, l}} c^{n_{4, l}} (F_3 (2 E_2 E_4 + (H_1 - H_2) E_3) + E_3 ((H_1 - H_2) F_3 + 2 F_2 F_4)) + \\
& \sum_{m \in \mathcal{M}} \lambda_m a_{1}^{n_{1, m}} a_{2}^{n_{2, m}} b^{n_{3, m}} c^{n_{4, m}} (F_3(2 E_1 E_2 F_3 - (H_1 - H_2)E_1 F_4 - (H_1 + H_2)E_2 E_4 \\
& - \frac{1}{2}(H_1 - H_2)(H_1 + H_2)E_3) + E_3(- 2 F_1 F_2 E_3 + (H_1 - H_2)F_1 E_4 + (H_1 + H_2)F_2 F_4 \\
& + \frac{1}{2}(H_1 - H_2)(H_1 + H_2)F_3) + \\
& \sum_{n \in \mathcal{N}} \lambda_n a_{1}^{n_{1, n}} a_{2}^{n_{2, n}} b^{n_{3, n}} c^{n_{4, n}}((H_1 + H_2)F_3 + 2 F_1 E_4)(2 E_2 E_4 + (H_1 - H_2)E_3) \\
& + (2 E_1 F_4 + (H_1 + H_2)E_3)(- (H_1 - H_2)F_3 - 2 F_2 F_4)) = 0.
\end{align*}
We first consider the summands without $H_1 - H_2, E_1, E_3$ and $F_3$ and we get, similarly as in case $a)$, that $\lambda_i = 0$ for all $i \in \mathcal{I}$ and $\lambda_n = 0$ for all $n \in \mathcal{N}$. Now we consider summands without $H_1 + H_2, E_2,  E_3$ and $ F_3$, and we get that $\lambda_k = 0$ for all $k \in \mathcal{K}$.
Then we consider summands without $H_1 + H_2, E_2, E_4$ and $F_4$, and we get that $\lambda_l = 0$ for all $l \in \mathcal{L}$ and $\lambda_m = 0$ for all $m \in \mathcal{M}$. From this we conclude that also $\lambda_j = 0$ for all $j \in \mathcal{J}$.

\item This time we consider summands of the form $\cdot \otimes E_4 \wedge F_3 \wedge F_4$. It is enough to show linear independence of the set

\begin{align*}
& \{a_{1}^{n_1} a_{2}^{n_2} b^{n_3} c^{n_4} E_3 \, | \, n_1, n_2, n_3, n_4 \in \mathbb{N}_{0} \} \cup \\
& \{a_{1}^{n_1} a_{2}^{n_2} b^{n_3} c^{n_4} \left ((H_1 + H_2) E_3  + 2 E_1 F_4 \right ) \, | \, n_1, n_2, n_3, n_4 \in \mathbb{N}_{0} \} \cup \\
& \{a_{1}^{n_1} a_{2}^{n_2} b^{n_3} c^{n_4} \left ( (H_1 - H_2) E_3 - 2 E_2 E_4 \right ) \, | \, n_1, n_2, n_3, n_4 \in \mathbb{N}_{0} \} \cup \\
& \{a_{1}^{n_1} a_{2}^{n_2} b^{n_3} c^{n_4} ( -4 E_1 E_2 F_3 + 2 (H_1 - H_2) E_1 F_4 + 2 (H_1 + H_2) E_2 E_4 +\\
&\qquad\qquad (H_1 + H_2)(H_1 - H_2) E_3 ) \, | \, 
 n_1, n_2, n_3, n_4 \in \mathbb{N}_{0} \}.
\end{align*}
We consider the summands without $H_1 - H_2$, $E_1$, $E_4$, $F_4$ and we get $\lambda_i = 0$ for all $i \in \mathcal{I}$ and $\lambda_j = 0$ for all $j \in \mathcal{J}$. Then we consider the summands without $H_1 + H_2$, $E_2$, $E_3$, $F_3$ and we get $\lambda_l = 0$ for all $l \in \mathcal{L}$ and $\lambda_k = 0$ for all $k \in \mathcal{K}$.
\end{enumerate}
This finishes the proof of linear independence of the set $S\cdot T$. To prove that $S\cdot T$ is also a spanning set, we consider the degrees of the elements of the set $\{c^{n_4}\}\cdot T$.
The degrees of these elements are respectively
\begin{align*}
& 4 n_4, 4 n_4 + 2, 4 n_4 + 3, 4 n_4 + 3, 4 n_4 + 3, 4 n_4 + 3, 4 n_4 + 4, 4 n_4 + 4, 4 n_4 + 4 \\
& 4 n_4 + 5, 4 n_4 + 5, 4 n_4 + 5, 4 n_4 + 5, 4 n_4 + 6, 4 n_4 + 6, 4 n_4 + 6.
\end{align*}
Considering the number of invariants in each degree of the set $\{c^{n_4}\,\big|\, n_4\in\bbZ_+\}\cdot T$, we get the following table:

\bigskip

\begin{center}
\begin{tabular}{|c|c|}
  \hline
  degree & number of invariants \\ \hline
  $0$ & $1$ \\
  $1$ & $0$ \\
  $2$ & $1$ \\
  $4k, \quad k \geq 1$ & $4$ \\
  $4k+1, \quad k \geq 1$ & $4$ \\
  $4k + 2, \quad k \geq 1$ & $4$ \\
  $4k + 3, \quad k \geq 0$ & $4$ \\
  \hline
\end{tabular}
\end{center}
This finishes the proof, since we got the same table as in Proposition \ref{number inv}.
\end{proof}

\section{The set of generators for $(U(\g) \otimes C(\p))^{K}$}

Recall that the symmetrization map $\sigma : S(\g) \longrightarrow U(\g)$ given by
\[
\sigma(x_1 x_2 \cdots x_n)= \frac{1}{n!} \sum_{\alpha \in S_n} x_{\alpha(1)} x_{\alpha(2)} \cdots x_{\alpha(n)}, \quad n \in \mathbb{N}, x_1, \cdots, x_n \in \g
\]
is an isomorphism of $K$--modules. The Chevalley map $\tau: \twedge(\p) \longrightarrow C(\p)$ given by 
\[
\tau(v_1 \wedge \cdots \wedge v_n) = \frac{1}{n!} \sum_{\alpha \in S_n} \text{sgn}(\alpha) v_{\alpha(1)} v_{\alpha(2)}\cdots v_{\alpha(n)}
\]
is also an isomorphism of $K$--modules.

Since for $z_1, \cdots, z_n \in \g$ and $\alpha \in S_n$ we have 
\[
z_1 \cdots z_n - z_{\alpha(1)} \cdots z_{\alpha(n)} \in U_{n-1}(\g),
\] 
it follows that
\[
\sigma(z_1 \cdots z_n) = z_1 \cdots z_n \quad\text{modulo } U_{n-1}(\g).
\]
Similarly, since for $y_1, \cdots, y_k \in \p$ and $\alpha \in S_k$ we have 
\[
y_1 \cdots y_k - \text{sgn}(\alpha) y_{\alpha(1)} \cdots y_{\alpha(k)} \in C_{k-1}(\p),
\]
it follows that
\[
\tau(y_1 \wedge \cdots \wedge y_k) = y_1 \cdots y_k \quad\text{modulo }  C_{k-1}(\p).
\]
So we have 
\begin{equation}\label{universal}
\sigma(xy) = \sigma(x) \sigma(y) + U_{n + m - 1}(\g), \quad x \in U_{n}(\g), y \in U_{m}(\g),
\end{equation}
and
\begin{equation}\label{clifford}
\tau(zw) = \tau(z) \tau(w) + C_{k + l - 1}(\g), \quad z \in C_{k}(\p), w \in C_{l}(\p).
\end{equation}

Let $\rho = \sigma \otimes \tau$. Using \eqref{universal}, \eqref{clifford} and Proposition \ref{basis} one shows by induction that the following lemma holds:

\begin{lemma}
\label{ugcpk12gen}
The algebra $(U(\g) \otimes C(\p))^{K}$ is generated by elements $\rho (a_1)$, $\rho(a_2)$, $\rho(b)$, $\rho(c)$, $\rho(D)$, $\rho(d)$, $\rho(e)$, $\rho(f)$, $\rho(g)$, $\rho(h)$, $\rho(i)$ and $\rho(j)$. \qed
\end{lemma}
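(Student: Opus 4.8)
The plan is to work with the natural filtration on $U(\g)\otimes C(\p)$ and pass to the associated graded algebra. Put $F_n=\sum_{p+q\le n}U_p(\g)\otimes C_q(\p)$; then \eqref{universal} and \eqref{clifford} say exactly that $\rho=\sigma\otimes\tau$ is filtration preserving and that the map it induces on associated graded algebras is the standard identification $\gr(U(\g)\otimes C(\p))=S(\g)\otimes\twedge(\p)$. Concretely this gives two facts I will use repeatedly: (i) if $v\in S(\g)\otimes\twedge(\p)$ is homogeneous of total degree $n$, then the symbol (top filtration part) of $\rho(v)$ in $F_n/F_{n-1}$ is $v$ itself; and (ii) for $x\in F_p$ and $y\in F_q$ the symbol of $xy$ in $F_{p+q}/F_{p+q-1}$ is the product of the symbols of $x$ and $y$.

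Since $\rho$ is an isomorphism of $K$-modules it restricts to a linear isomorphism of $(S(\g)\otimes\twedge(\p))^K$ onto $(U(\g)\otimes C(\p))^K$, and by Proposition~\ref{basis} the former has basis $S\cdot T$; hence $\{\rho(s\cdot t): s\in S,\ t\in T\}$ is a basis of $(U(\g)\otimes C(\p))^K$. Note also that each basis element $s\cdot t$ is, in the algebra $S(\g)\otimes\twedge(\p)$, a monomial in the twelve elements $a_1,a_2,b,c,D,d,e,f,g,h,i,j$: indeed $s=a_1^{n_1}a_2^{n_2}b^{n_3}c^{n_4}$ by definition of $S$, and every member of $T$ is either one of $1,D,d,e,f,g,h,i,j$ or a product of two of them, namely $Dd$, $De$, $Df$, $Dg$, $fg$, $Dh$, $dg$. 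All twelve of these elements of $S(\g)\otimes\twedge(\p)$ are $K$-invariant, so their images under the $K$-map $\rho$ lie in $(U(\g)\otimes C(\p))^K$; let $A$ denote the subalgebra of $(U(\g)\otimes C(\p))^K$ that they generate.

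I would then prove $F_n\cap(U(\g)\otimes C(\p))^K\subseteq A$ by induction on $n$. The case $n=0$ is clear because $\rho(1)=1\in A$. For the inductive step let $u\in F_n\cap(U(\g)\otimes C(\p))^K$. Its symbol in $F_n/F_{n-1}$ is a $K$-invariant homogeneous element of $S(\g)\otimes\twedge(\p)$ of total degree $n$, hence a linear combination $\sum_{s,t}\lambda_{s,t}\,(s\cdot t)$ over those pairs with $\deg(s\cdot t)=n$. Writing each such $s\cdot t$ as a monomial $a_1^{n_1}a_2^{n_2}b^{n_3}c^{n_4}\,t_1 t_2$ with $t_1,t_2$ among the twelve generators (taking $t_2=1$ when $t$ is a single generator), form the corresponding product $u'=\sum_{s,t}\lambda_{s,t}\,\rho(a_1)^{n_1}\rho(a_2)^{n_2}\rho(b)^{n_3}\rho(c)^{n_4}\,\rho(t_1)\rho(t_2)\in A$. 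By (i) and (ii) the symbol of $u'$ in $F_n/F_{n-1}$ equals $\sum_{s,t}\lambda_{s,t}\,(s\cdot t)$, the same as that of $u$, so $u-u'\in F_{n-1}$; moreover $u-u'$ is $K$-invariant since $u$ and $u'$ are. The inductive hypothesis gives $u-u'\in A$, hence $u\in A$. Taking the union over $n$ yields $(U(\g)\otimes C(\p))^K=A$, which is the assertion of the lemma.

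I do not anticipate any real obstacle: the argument is the routine ``lift a graded-algebra generating set'' induction, and its only delicate points are bookkeeping. The one place where one genuinely uses the specific shape of Proposition~\ref{basis} is that the set $T$ was arranged to consist of products of \emph{at most two} of the generators, so that each basis monomial $s\cdot t$ can be written explicitly as a product of the twelve generators and its symbol recovered from (i) and (ii); and one should keep in mind that the ``error term'' $u-u'$ is automatically $K$-invariant and of strictly smaller filtration degree, which is what makes the induction close.
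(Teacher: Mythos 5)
Your proposal is correct and takes essentially the same route as the paper: the paper's proof is exactly the one-line invocation of \eqref{universal}, \eqref{clifford} and Proposition~\ref{basis} followed by ``one shows by induction,'' and your write-up supplies precisely that induction on the filtration degree, including the key bookkeeping point that every element of $T$ is a product of at most two of the listed generators.
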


We define the $\ka$-Dirac operator $D_{\ka}\in U(\g)\otimes C(\p)$ mentioned in the introduction by 
\begin{align*}
D_{\ka} & = E_1 \otimes \alpha(2 F_1) + E_2 \otimes \alpha(2 F_2) + F_1 \otimes \alpha(2 E_2) + F_2 \otimes \alpha(2 E_2) \\
& + (H_1 - H_2) \otimes \alpha(H_1 - H_2) + (H_1 + H_2) \otimes \alpha(H_1 + H_2).
\end{align*}

We can now reduce the set of generators for $(U(\g) \otimes C(\p))^{K}$ given by Lemma \ref{ugcpk12gen}, since we have
\begin{align*}
\rho(b) & = -\frac{1}{2} D^2 + D_{\ka}, \\
\rho(d) & = D_{\ka}-\frac{1}{2} \Bigl (D_{\ka} \cdot \rho(i)  + \rho(i) \cdot D_{\ka} \Bigr ), \\ 
\rho(e) & = -D_{\ka}-\frac{1}{2} \Bigl (D_{\ka} \cdot \rho(i)  + \rho(i) \cdot D_{\ka} \Bigr ), \\
\rho(j) & = \frac{1}{2} \Bigl ( \rho(i) \cdot D  - D \cdot \rho(i) \Bigr ), \\
\rho(f) & = \frac{1}{2} \Bigl (\rho(d) \cdot D - D \cdot \rho(d) - 3 \rho(j) \Bigr ), \\
\rho(g) & = \frac{1}{2} \Bigl (\rho(e) \cdot D - D \cdot \rho(e) - 3 \rho(j) \Bigr ), \\
\rho(h) & = \frac{1}{4} \left ( \rho(d) \left (\rho(g) + \frac{3}{2}D \right ) + \rho(e)\left (\rho(f) - \frac{3}{2}D \right ) - \frac{3}{4}\Bigl ( \rho(f) - \rho(g) - D \Bigr ) \right ), \\
\rho(c) & = \frac{1}{4} \Biggl ( \left ( \rho(f) - \frac{3}{2}D \right ) \cdot \left ( \rho(g) + \frac{3}{2}D \right ) + \left ( \rho(g) + \frac{3}{2}D \right ) \cdot \left ( \rho(f) - \frac{3}{2}D \right ) \\
& - (\rho(a_2) \cdot \rho(d) + \rho(d) \cdot \rho(a_2)) + (\rho(a_1) \cdot \rho(e) + \rho(e) \cdot \rho(a_1)) \\
& - \frac{1}{2} \Bigl ( \rho(g) \cdot D - D \cdot \rho(g) \Bigr ) + \frac{1}{2} \Bigl (\rho(f) \cdot D - D \cdot \rho(f) \Bigr ) \\
& + (4 \cdot \rho(b) + 5) \cdot (\rho(d) - \rho(e))  + 6 \cdot \rho(b) - 8 \cdot \rho(a_1) - 8 \cdot \rho(a_2) \Biggr ).
\end{align*}

From this we conclude 

\begin{theorem}
\label{thm main} 
The algebra $(U(\g) \otimes C(\p))^{K}$ is generated by the elements $\rho(a_1)$, $\rho(a_2)$, $\rho(i) \in C(\p)^{K}$, $D$ and $D_{\ka}$.
\end{theorem}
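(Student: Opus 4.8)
The plan is to start from Lemma \ref{ugcpk12gen}, which already tells us that $(U(\g)\otimes C(\p))^K$ is generated by the twelve elements $\rho(a_1),\rho(a_2),\rho(b),\rho(c),\rho(D),\rho(d),\rho(e),\rho(f),\rho(g),\rho(h),\rho(i),\rho(j)$. Since $\rho(D)=D$ (the Dirac operator is already in the symmetrized form), it suffices to express each of the remaining nine generators $\rho(b),\rho(c),\rho(d),\rho(e),\rho(f),\rho(g),\rho(h),\rho(i),\rho(j)$ in terms of $\rho(a_1),\rho(a_2),\rho(i),D$ and $D_\ka$ by repeated use of the algebra operations (sums and products) inside $(U(\g)\otimes C(\p))^K$. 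The element $\rho(i)$ is kept as a generator, so it remains to handle the other eight; the displayed list of identities for $\rho(b),\rho(d),\rho(e),\rho(j),\rho(f),\rho(g),\rho(h),\rho(c)$ preceding the theorem is exactly this chain, arranged so that each right-hand side uses only already-available elements.

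Concretely, I would verify the identities in the order in which they are listed, checking that the dependency graph is acyclic: $\rho(b)=-\tfrac12 D^2+D_\ka$ uses only $D$ and $D_\ka$; then $\rho(d)$ and $\rho(e)$ are expressed via $D_\ka$ and $\rho(i)$; then $\rho(j)$ via $\rho(i)$ and $D$; then $\rho(f),\rho(g)$ via the already-obtained $\rho(d),\rho(e),\rho(j)$ and $D$; then $\rho(h)$ via $\rho(d),\rho(e),\rho(f),\rho(g),D$; and finally $\rho(c)$ via $\rho(f),\rho(g),\rho(a_1),\rho(a_2),\rho(b),\rho(d),\rho(e),D$. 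Once all nine relations are confirmed, the subalgebra generated by $\{\rho(a_1),\rho(a_2),\rho(i),D,D_\ka\}$ contains all twelve generators from Lemma \ref{ugcpk12gen}, hence equals $(U(\g)\otimes C(\p))^K$. The reverse inclusion is immediate since $\rho(a_1),\rho(a_2),\rho(i)$ are $K$-invariant by construction and $D,D_\ka$ are manifestly $K$-invariant.

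The actual content of the proof — and the main obstacle — is verifying each of these eight identities. Since $U(\g)\otimes C(\p)$ is associative and noncommutative, each identity is a statement that two explicit elements agree after expanding all products using the commutation relations of $\g$ (Table~\ref{tab:commtable}) and the Clifford relations $y_1y_2+y_2y_1=2B(y_1,y_2)$ on $\p$. The computations for $\rho(h)$ and especially $\rho(c)$ are the heaviest: $\rho(c)$ lies in degree $4$ of $S(\g)$ and its defining expression already involves quartic terms in $\g$, so the product $\bigl(\rho(f)-\tfrac32 D\bigr)\bigl(\rho(g)+\tfrac32 D\bigr)$ and the anticommutators with $\rho(a_1),\rho(a_2)$ must be expanded carefully, tracking all lower-order correction terms produced by \eqref{universal} and \eqref{clifford}. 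A useful organizing principle is to split each $\rho(\cdot)$ into its leading symmetric part (which matches $S(\g)\otimes\twedge(\p)$ and can be checked against Proposition~\ref{basis}) plus explicit lower-order terms, and to compare the two sides degree by degree; the leading-term comparison is essentially the computation already done in $S(\g)\otimes\twedge(\p)$, and only the lower-order corrections require genuine work in $U(\g)\otimes C(\p)$.

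Finally, one should note that no independence claim is being made in the theorem — we only assert that these five elements generate — so there is no need to re-derive the Hilbert series or revisit Proposition~\ref{basis}; the entire proof is the (lengthy but routine) verification of the eight displayed identities followed by the two-line containment argument above.
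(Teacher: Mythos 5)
Your proposal follows exactly the paper's argument: the paper derives Theorem~\ref{thm main} by combining Lemma~\ref{ugcpk12gen} with the displayed chain of identities expressing $\rho(b)$, $\rho(d)$, $\rho(e)$, $\rho(j)$, $\rho(f)$, $\rho(g)$, $\rho(h)$, $\rho(c)$ in terms of $\rho(a_1)$, $\rho(a_2)$, $\rho(i)$, $D$, $D_{\ka}$ and previously obtained elements, in precisely the acyclic order you describe. Your additional remarks on how to verify the identities (leading symmetric part versus lower-order corrections from \eqref{universal} and \eqref{clifford}) are a sound elaboration of what the paper leaves implicit.
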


By the same argument as in \cite[Corollary~4.5.]{Pr1}, we get
\begin{cor}
The algebra $(U(\g) \otimes C(\p))^{K}$ is a free module over $U(\g)^{K}$ of rank $16$.
\end{cor}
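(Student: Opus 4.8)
The plan is to pass to the associated graded algebra and then read the statement off Proposition~\ref{basis}. Equip $U(\g)\otimes C(\p)$ with the filtration whose degree-$n$ part is $\sum_{i+j=n}U_i(\g)\otimes C_j(\p)$; by \eqref{universal} and \eqref{clifford} the symbol maps identify $\gr\bigl(U(\g)\otimes C(\p)\bigr)$ with $S(\g)\otimes\twedge(\p)$ as graded algebras, and $\rho=\sigma\otimes\tau$ becomes a filtration-preserving map with $\gr\rho=\id$. Since $K$ is reductive, taking $K$-invariants is exact and commutes with $\gr$, so $\gr\bigl((U(\g)\otimes C(\p))^K\bigr)=(S(\g)\otimes\twedge(\p))^K$ and $\gr\bigl(U(\g)^K\bigr)=S(\g)^K$. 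Finally, left multiplication by $U(\g)^K\otimes 1$ makes $(U(\g)\otimes C(\p))^K$ a filtered module over $U(\g)^K$, and the induced action of $\gr(U(\g)^K)=S(\g)^K$ on $(S(\g)\otimes\twedge(\p))^K$ is simply multiplication.

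The first substantive point is that $S(\g)^K$ equals the polynomial ring $\bbC[a_1,a_2,b,c]$ (with $a_1,a_2,b$ in degree $2$ and $c$ in degree $4$). Algebraic independence of $a_1,a_2,b,c$ is exactly part~a) of the proof of Proposition~\ref{basis}, so $\bbC[a_1,a_2,b,c]\hookrightarrow S(\g)^K$ as graded algebras. For surjectivity I would compare Hilbert series: from $S(\g)=\bbC[a_1,a_2,b]\otimes\mathcal{H}_{\ka_1}\otimes\mathcal{H}_{\ka_2}\otimes\mathcal{H}_{\p}$ and the fact that $(\mathcal{H}_{\ka_1}\otimes\mathcal{H}_{\ka_2}\otimes\mathcal{H}_{\p})^K$ is one-dimensional in each degree $4n$ and zero in all other degrees (the $\twedge^0(\p)$-specialization of the count carried out in Section~\ref{degrees}), one reads off $\sum_d(\dim S(\g)^K_d)\,t^d=\frac{1}{(1-t^2)^3(1-t^4)}$, which is also the Hilbert series of $\bbC[a_1,a_2,b,c]$; an injection of graded vector spaces with equal Hilbert series is an isomorphism. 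Consequently Proposition~\ref{basis} says precisely that $(S(\g)\otimes\twedge(\p))^K$ is a \emph{free} module over $S(\g)^K$ with the $16$-element homogeneous basis $T=\{1,D,d,e,f,g,h,i,j,Dd,De,Df,Dg,fg,Dh,dg\}$.

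It then remains to invoke the standard lifting principle for filtered modules: if $A$ is a filtered algebra with exhaustive, bounded-below filtration and $M$ a filtered $A$-module, and if $\gr M$ is free over $\gr A$ on homogeneous generators, then any lifts of these generators to $M$ form a free $A$-basis of $M$ — generation follows by induction on filtration degree, and freeness by examining the top-degree symbol of a hypothetical relation $\sum a_im_i=0$ and applying freeness of $\gr M$ together with the separatedness of the filtration. Taking $A=U(\g)^K$, $M=(U(\g)\otimes C(\p))^K$, and lifting the basis $T$ of $\gr M=(S(\g)\otimes\twedge(\p))^K$ to $\rho(T)=\{\rho(1),\rho(D),\dots,\rho(dg)\}$ (legitimate lifts since $\gr\rho=\id$), we conclude that $(U(\g)\otimes C(\p))^K$ is a free $U(\g)^K$-module with basis $\rho(T)$, hence of rank $16$. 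This reproduces the argument of \cite[Corollary~4.5]{Pr1}.

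The only ingredient that goes beyond bookkeeping is the Hilbert-series identification of $S(\g)^K$ with $\bbC[a_1,a_2,b,c]$, and this is the step I would take most care over; the compatibilities of the three filtrations with the symbol maps and the lifting principle itself are standard but should be spelled out precisely.
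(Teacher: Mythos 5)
Your argument is correct and is essentially the argument the paper intends: the paper simply defers to \cite[Corollary~4.5]{Pr1}, and what you have written out is exactly that standard passage to the associated graded, where Proposition~\ref{basis} exhibits $(S(\g)\otimes\twedge(\p))^K$ as a free $S(\g)^K$-module on the $16$-element set $T$, followed by the filtered lifting principle. Your Hilbert-series verification that $S(\g)^K=\bbC[a_1,a_2,b,c]$ (using the degree-$4n$ count of invariants in $\mathcal{H}_{\ka_1}\otimes\mathcal{H}_{\ka_2}\otimes\mathcal{H}_{\p}$ together with the algebraic independence from part~a) of the proof of Proposition~\ref{basis}) is a point the paper leaves implicit, and you are right to single it out as the one step deserving care.
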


The last result is a special case of the more general result proved in Corollary \cite[Corollary~1]{K}.

\vspace{.2in}

\end{document}